\numberwithin{equation}{section}
\newtheorem{thm}{Theorem}[subsection]
\newtheorem{cor}[thm]{Corollary}
\newtheorem{prop}[thm]{Proposition}
\newtheorem{lemma}[thm]{Lemma}
\theoremstyle{definition}
\theoremstyle{remark}
\newtheorem{rmk}[thm]{\textbf{Remark}}
\theoremstyle{definition}
\newtheorem{dfn}{Definition}[subsection]
\theoremstyle{remark}
\theoremstyle{remark}
\def\imod#1{\allowbreak\mkern10mu({\operator@font mod}\,\,#1)}
\title{\textbf{A Brief Introduction To Splitting Of Primes Over Number Fields}}
\author{SUBHAM DE}
\affil{Department of Mathematics}
\affil{Indian Institute of Technology, Delhi, India.}
\affil{Email: \textbf{mas227132@iitd.ac.in}}
\date{}
\begin{document}
	
	\maketitle
	\thispagestyle{empty}

	\begin{abstract}
		The study of \textit{Dedekind Zeta Functions} over a number field extension uses different aspects of both \textit{Algebraic} and \textit{Analytic Number Theory}. In this paper, we shall learn about the structure and different analytic aspects of such functions, namely the domain of its convregence and analyticity at different points of $\mathbb{C}$ when the function is defined over any finite field extension $K$ over $\mathbb{Q}$ . Moreover, given any two Number Fields $L$ and $K$ over $\mathbb{Q}$ with $L$ being Normal over $K$, our intention is to classify and study the primes in $K$ which split completely in $L$. Also, we shall explore some special cases related to this result.\\\\ 
		\textit{\textbf{Keywords and Phrases : }}Dirichlet Series, Dedekind Zeta Functions, Number Field Extension, Riemann Hypothesis, Holomorphic Functions, Partial Dedekind Zeta Functions, Polar Density, Finite Extension, Class Number Formula, Dirichlet L-Functions.\\\\
		\texttt{\textbf{2020 MSC: }}\texttt{Primary  11-02, 11M36, 11R42, 11R45, 11S40 }.\\
		\hspace*{53pt}\texttt{Secondary 11R44, 11S15, 32A60, 32A10 }.
	\end{abstract}
	\pagenumbering{arabic}

	\tableofcontents

	\pagestyle{fancy}
	
	\fancyhead[LO,RE]{\markright}
	\lfoot[]{Subham De}
	\rfoot[]{IIT Delhi, India}

	\section{Motivation to study Dedekind Zeta Functions}
	
		\textit{Dedekind Zeta Functions }over an algebraic number field $K$ over $\mathbb{Q}$, usually denoted by $\zeta_{K}(s)$, where, $s\in \mathbb{C}$, is often considered as the generalization of the \textit{Riemann Zeta Functions} over $\mathbb{Q}$ (It is to be noted that, in the case for the \textit{Riemann zeta functions}, we take the field $K$ mentioned above to be $\mathbb{Q}$ itself.)\par
	\textit{Dedekind zeta functions} can be expressed as a \textbf{\textit{Dirichlet Series}}, as well as it has an \textbf{\textit{Euler Product Expansion}}, which we shall discuss thoroughly in the next section.\par
	Worth mentioning that, Dedekind zeta functions satisfy a \textit{functional equation}. Moreover, they can be analytically extended to a \textit{meromorphic function} on the complex plane $\mathbb{C}$ with only a simple pole at $s=1$. Indeed, it helps us conclude about various properties of the number field $K$ over $\mathbb{Q}$.\par
	Another aspect of Dedekind zeta functions is the \textbf{\textit{Riemann Hypothesis}}, which is till date considered as one of the very few unsolved problems in the field of mathematics, the statement of the conjecture is as follows:
	\begin{center}
		\textit{If $\zeta_{K}(s)=0$ and, $0<Re(s)<1$, then, $Re(s)=\frac{1}{2}$}.
	\end{center}

	\section{A Brief Overview of Dirichlet Series}

	\subsection{Introduction}
	Suppose we consider the \textit{Dirichlet's Series} of complex numbers defined by,
	\begin{center}
		$\textit{D(s)}=\sum\limits_{n=1}^{\infty}\frac{a_{n}}{n^{s}}$  ,
	\end{center}
	where, $a_{n}$'s are fixed complex numbers  $\forall$ $n$, and the co-efficients of different terms in the series. \\
	$s=x+iy$;  $x,y\in\mathbb{R}$     is a complex variable.\\
	First we observe the analyticity of the \textit{Dirichlet Series} on different points of $\mathbb{C}$.
	\subsection{Analytic Properties}
	
	\begin{dfn}\label{def1}
		(Holomorphic Functions) A \textit{holomorphic function }is a complex-valued function of one or more complex variables that is complex differentiable in a neighborhood of every point in its domain.\\
		More generally, the term \textit{holomorphic }is used in the same sense as an \textit{analytic function}.
	\end{dfn}
	Let us mention the following lemma regarding analyticity of \textit{Dirichlet Functions} on $\mathbb{C}$ :
	\begin{lemma}\label{lemma1}
		Suppose for the above \textit{Dirichlet Series},  the partial sum of co-efficients, $\sum\limits_{n\leq t}a_{n}$ = $O(t^{r})$ for some real number $r>0$. Then the \textit{Dirichlet Series} converges for every $s\in \mathbb{C}$ with $Re(s)>r$ , and is analytic, as well as holomorphic function on that half-plane.
	\end{lemma}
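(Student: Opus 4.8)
The plan is to use Abel summation (summation by parts) to transfer the growth hypothesis on the partial sums of the coefficients into genuine convergence of the Dirichlet series, and then to promote pointwise convergence to holomorphy by the Weierstrass convergence theorem, invoking Definition \ref{def1}.

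I would begin by writing $A(t) := \sum_{n \leq t} a_n$, so that the hypothesis reads $|A(t)| \leq C t^r$ for all sufficiently large $t$ and some constant $C > 0$. Fix $s = \sigma + iy$ with $\sigma = \mathrm{Re}(s) > r$. For a tail $M \leq n \leq N$, writing $a_n = A(n) - A(n-1)$ and summing by parts gives
$$\sum_{n=M}^{N} \frac{a_n}{n^s} = \frac{A(N)}{N^s} - \frac{A(M-1)}{M^s} + \sum_{n=M}^{N-1} A(n)\left(\frac{1}{n^s} - \frac{1}{(n+1)^s}\right).$$
The decisive estimate is for the increment: since $\frac{1}{n^s} - \frac{1}{(n+1)^s} = s\int_n^{n+1} x^{-s-1}\,dx$, we obtain
$$\left|\frac{1}{n^s} - \frac{1}{(n+1)^s}\right| \leq \frac{|s|}{n^{\sigma+1}}.$$
Combining this with $|A(n)| \leq C n^r$ bounds the general term of the series by $C|s|\, n^{\,r - \sigma - 1}$. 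Because $\sigma - r + 1 > 1$, the comparison series $\sum_n n^{\,r - \sigma - 1}$ converges, while the boundary terms are $O(N^{\,r-\sigma})$ and $O(M^{\,r-\sigma})$, both vanishing as $M, N \to \infty$ since $r - \sigma < 0$. The Cauchy criterion then delivers convergence of the Dirichlet series throughout the half-plane $\{\mathrm{Re}(s) > r\}$.

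For analyticity I would observe that each finite partial sum $\sum_{n=1}^{N} a_n n^{-s}$ is entire, so it suffices to show uniform convergence on compact subsets of the open half-plane. On such a compact set one has $\sigma \geq r + \delta$ for some $\delta > 0$ and $|s| \leq M_0$; the tail is then dominated by $C M_0 \sum_{n \geq M} n^{-1-\delta}$, a bound independent of $s$ that tends to $0$ as $M \to \infty$. Uniform convergence on compacta, together with the Weierstrass theorem, gives holomorphy on the whole half-plane $\mathrm{Re}(s) > r$.

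The main subtlety I expect is precisely the factor $|s|$ that appears in the increment estimate: it rules out uniform convergence on the entire half-plane $\sigma \geq r + \delta$, so one cannot conclude holomorphy directly from a single uniform bound there. The remedy is to restrict to compact sets, where $|s|$ is controlled — legitimate because holomorphy is a local property. A minor additional point is that the bound $|A(t)| \leq C t^r$ only holds for large $t$, but the finitely many exceptional terms are harmless and may be absorbed into the constant.
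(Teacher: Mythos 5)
Your proposal is correct and follows essentially the same route as the paper: Abel summation on the tail $\sum_{n=M}^{N} a_n n^{-s}$, the integral estimate $\bigl|n^{-s}-(n+1)^{-s}\bigr|\leq |s|\,n^{-\sigma-1}$, and then uniform convergence on compact subsets (where $|s|$ is bounded and $\sigma-r\geq\delta$) combined with the Weierstrass convergence theorem to get holomorphy. Your write-up is in fact somewhat more careful than the paper's in isolating why the factor $|s|$ forces the restriction to compacta, but there is no substantive difference in method.
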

	\begin{proof}
		Here, for the proof of lemma \eqref{lemma1}, we apply \textit{Weierstrass Theorem} for analyticity of power series.
	\begin{thm}\label{thm1}
		(Weierstrass Theorem) Suppose that, for every $n\in \mathbb{N}$, the function $f_{n}(z)$ is analytic in the region $\Omega_{n}$, also the sequence ${f_{n}(z)}$ converges to a limit function $f(z)$ in a region $\Omega$, and converges uniformly on every compact subset of $\Omega$. Then, $f(z)$ is analytic in $\Omega$, moreover, $f_{n}'(z)\longrightarrow f'(z)$ uniformly on every compact subset of $\Omega$.
	\end{thm}
		As evident from the statement of Theorem \eqref{thm1}, it only suffices to establish the absolute convergence of the series on every compact subset of the half-plane.\\
		Defining the partial summation of the co-efficients as,
		$A_{k}=\sum\limits_{n=1}^{k}a_{n}$, we obtain, for any $m,M\in \mathbb{N}$  with, $1\leq m<M$,
		\begin{eqnarray}\label{1}
			\sum\limits_{n=m}^{N}\frac{a_{n}}{n^{s}}=\sum\limits_{n=m}^{M}\frac{A_{n}}{n^{s}}-\sum\limits_{n=m}^{M}\frac{A_{n}}{n^{s}}\space  =  \space\frac{A_{M}}{M^{s}}-\frac{A_{m-1}}{m^{s}}+\sum\limits_{n=m}^{M-1}A_{n} ( \frac{1}{n^{s}}-\frac{1}{(n+1)^{s}} ).			
		\end{eqnarray}
		Since, by statement, the partial sum is $O(t^{r})$, therefore, $\exists$ a positive $B\in \mathbb{R}$ such that,  $|A_{n}|\leq Bn^{r}$,  $\forall$ $n\in \mathbb{N}$. Applying this condition in equation \eqref{1}, we get,
		\begin{eqnarray}\label{2}
			|\sum\limits_{n=m}^{M}\frac{a_{n}}{n^{s}}|\space\leq\space B\left(\frac{M^{r}}{|M^{s}|}+\frac{(m-1)^{r}}{|m^{s}|}+\sum\limits_{n=m}^{M-1}n^{r}|\frac{1}{n^{s}}-\frac{1}{(n+1)^{s}}|\right)			
		\end{eqnarray} 
		Therefore, 
		\begin{center}
			$|\frac{1}{n^{s}}-\frac{1}{(n+1)^{s}}|\leq |s|\int\limits_{n}^{n+1}\frac{dt}{|t^{s+1}}|=|s|\int\limits_{n}^{n+1} \frac{dt}{t^{x+1}}\leq \frac{|s|}{n^{x+1}}$
		\end{center} 
		( Since, assuming $s=x+iy$, where, $x=Re(s)\in \mathbb{R}$ and, $y\in \mathbb{R}$, it can be written that, $ \frac{1}{n^{s}} - \frac{1}{(n+1)^{s}}=s\int\limits_{n}^{n+1}\frac{dt}{t^{s+1}}$, and hence, the above result follows ).\\
		Which yields, from equation \eqref{2},
		\begin{eqnarray}\label{3}
			|\sum\limits_{n=m}^{M}\frac{a_{n}}{n^{s}}|\space \leq \space B\left(M^{r-x}+m^{r-x}+|s|\sum\limits_{n=m}^{M-1}n^{r-x-1}\right).	
		\end{eqnarray}
		Since, we have, 
		\begin{center}
			$\sum\limits_{n=m}^{M-1}n^{r-x-1}\leq \int\limits_{m-1}^{\infty}t^{r-x-1}dt$
		\end{center}
		 ( This improper integral is valid, by Comparison Test )
		\begin{center}
			$=\lim\limits_{v\rightarrow \infty}\int\limits_{m-1}^{v}t^{r-x-1}dt=\frac{(m-1)^{r-x}}{x-r}$ ,  for any $m>1$,
		\end{center}
	 	 Hence, having put $m\rightarrow \infty$ and $M\rightarrow \infty$, from \eqref{3}, it is evident that,
		\begin{center}
			$|\sum\limits_{n=m}^{M}\frac{a_{n}}{n^{s}}|  \longrightarrow 0$  
		\end{center}
		for any fixed $s=x+iy$ in the half-plane, $x>r$.\\
		This implies that, the partial sum of the \textit{Dirichlet Series }converges, consequently, we conclude that, the series itself converges $\forall$ $s=x+iy$ with $x>r$.\\
		Moreover, since each such compact set in the half-plane
		\begin{center}
			$\{s=x+iy\in \mathbb{C}\mbox{ }|\mbox{ }x,y\in \mathbb{R} \mbox{ \& } Re(s)=x>r\}$
		\end{center}
		is bounded ( we can have, $|s|\leq B_{1}$ for some positive real $B_{1}>0$ ) and is away from the line, $Re(s)=x=r$  (Since, $x>r$ $\implies$ $x-r\geq \epsilon$ for some $\epsilon>0$ ). It implies that, the convergence of the \textit{Dirichlet Series }is also uniform on every compact subset of the half-plane, $x>r$, $x=Re(s)$.\\
		Thus, we have an uniform estimate,
		\begin{center}
			$|\sum\limits_{n=m}^{M}\frac{a_{n}}{n^{s}}| \leq B\left(\frac{1}{m^{\epsilon}}+B_{1}\frac{1}{\epsilon(m-1)^{\epsilon}}\right)$
		\end{center}
		and the proof of the lemma \eqref{lemma1} is complete.
		\end{proof}
		\begin{rmk}\label{rmk1}
				As an appropriate example reflecting the significance of Lemma \eqref{lemma1}, we can observe that, the \textit{\underline{Riemann Zeta Function}} defined by,
			\begin{center}
				$\zeta(s)=\sum\limits_{n=1}^{\infty}\frac{1}{n^{s}}$
			\end{center}
			converges. Since the partial sum of the co-efficients of the above series is $=O(n)$, hence, applying Lemma \eqref{lemma1}, it can be verified that, the function is analytic on the half-plane, $x>1$.
		\end{rmk}
		\section{Dedekind Zeta Function}
		\subsection{Formal Definition}
		\begin{dfn}\label{def2}
			(Dedekind Zeta Function) As a more generalized case of \textit{Riemann Zeta Function} for a number field $K$ over $\mathbb{Q}$, we have the following definition of \underline{\textit{Dedekind  Zeta Function}} for the half-plane, $Re(s)=x>1$,
			\begin{center}
				$\zeta_{k}(s)=\sum\limits_{n=1}^{\infty}\frac{j_{n}}{n^{s}}$, 
			\end{center} 
		 where, $s\in \mathbb{C}$, such that, $Re(s)=x>1$.\\
			And we also define for every  $n\in \mathbb{N}$ ,
			\begin{center}
			$j_{n}=$ Number of ideals $\mathcal{I}$ of the Ring of Integers of $K$ over $\mathbb{Q}$ \\
			\hspace*{80pt}(denoted by, $\mathcal{O}_{K}$) with, $\mathcal{N(I)}=n$ ,
			\end{center}
		\end{dfn}
		\subsection{Analyticity of $\zeta_{K}(s)$}
		For determining the domain of $\zeta_{K}(s)$, where the function is analytic, we introduce an important result,
		\begin{thm}\label{thm2}
			Let $K/ \mathbb{Q}$ be a finite extension of number fields with $[K:\mathbb{Q}]=n$ (say), and $\mathcal{O}_{K}$ be the ring of integers of $K$. We define, for each positive $t\geq0$, $i(t)=$ Number of ideals $\mathcal{I}$ of $\mathcal{O}_{K}$ with $\mathcal{N(I)}\leq t$, and also, for each ideal class $C$ (say), define,
			$i_{C}(t)=$ Number of ideals $I$ in $C$ with $\mathcal{N(I)}\leq t$, i.e., 
			\begin{center}
				$i(t)=\sum\limits_{C}i_{C}(t)$ 
			\end{center}
			It is important to note that, the above sum is finite.\\
			Then $\exists$ a number $\c{k}$, depending on $\mathcal{O}_{K}$ but independent of the ideal class $C$, such that,
			\begin{center}
				$i_{C}(t)=\c{k}t+\epsilon_{C}(t)$
			\end{center}
			Where the error term is denoted by $\epsilon_{C}(t)$, which is $O(t^{1-\frac{1}{n}})$. In other terms, the ratio, $\frac{\epsilon_{C}(t)}{t^{1-\frac{1}{n}}}$ is bounded by a finite real number as $t\rightarrow \infty$.
		\end{thm}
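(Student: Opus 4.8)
The plan is to convert the arithmetic counting problem into a geometric lattice-point count in Minkowski space and then apply the general principle that the number of points of a fixed lattice inside a homogeneously expanding region is asymptotic to the region's volume, with an error governed by the size of its boundary. The first move is to use the finiteness of the class group to trade arbitrary ideals in $C$ for principal ones. Fix once and for all an integral ideal $\mathfrak{b}$ lying in the inverse class $C^{-1}$. For every ideal $I$ in $C$ the product $I\mathfrak{b}$ lies in the principal class, so $I\mathfrak{b}=(\alpha)$ for some nonzero $\alpha\in\mathfrak{b}$; conversely each nonzero $\alpha\in\mathfrak{b}$ gives an integral ideal $(\alpha)\mathfrak{b}^{-1}$ in $C$, and two elements $\alpha,\alpha'$ produce the same ideal exactly when they differ by a unit of $\mathcal{O}_{K}$. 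Since $\mathcal{N}((\alpha))=|N_{K/\mathbb{Q}}(\alpha)|=\mathcal{N}(I)\,\mathcal{N}(\mathfrak{b})$, the bound $\mathcal{N}(I)\le t$ translates into $|N_{K/\mathbb{Q}}(\alpha)|\le t\,\mathcal{N}(\mathfrak{b})$. Thus $i_{C}(t)$ equals the number of nonzero $\alpha\in\mathfrak{b}$, counted modulo units, satisfying this norm inequality.

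Next I would pass to the canonical Minkowski embedding $K\hookrightarrow\mathbb{R}^{r_{1}}\times\mathbb{C}^{r_{2}}\cong\mathbb{R}^{n}$, under which the image of $\mathfrak{b}$ is a full lattice $\Lambda$ whose covolume is a fixed multiple of $\sqrt{|d_{K}|}\,\mathcal{N}(\mathfrak{b})$, and the absolute norm extends to a continuous function on $\mathbb{R}^{n}$ that is homogeneous of degree $n$ in the coordinates. To handle the ambiguity by units I would invoke Dirichlet's unit theorem, writing the unit group as $\mu_{K}\times\mathbb{Z}^{r_{1}+r_{2}-1}$ with $w_{K}=|\mu_{K}|$ roots of unity, and use the logarithmic embedding to build a cone-like fundamental domain $\mathcal{F}$ for the action of the free part of the units on the nonzero vectors. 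Normalizing $\mathcal{F}$ so that each orbit is represented correctly, $i_{C}(t)$ becomes $w_{K}^{-1}$ times the number of lattice points of $\Lambda$ lying in the region $X(t)=\{x\in\mathcal{F}:|N(x)|\le t\,\mathcal{N}(\mathfrak{b})\}$.

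Because $N$ is homogeneous of degree $n$, the region $X(t)$ is precisely the dilation of the fixed region $X(1)$ by the factor $t^{1/n}$, so $\mathrm{vol}(X(t))=t\,\mathrm{vol}(X(1))$. Applying the lattice-point counting principle gives
\[
\#\bigl(\Lambda\cap X(t)\bigr)=\frac{\mathrm{vol}(X(t))}{\mathrm{covol}(\Lambda)}+O\!\left(t^{1-\frac{1}{n}}\right),
\]
where the error is controlled by the $(n-1)$-dimensional measure of $\partial X(t)$, which scales like $(t^{1/n})^{\,n-1}=t^{1-1/n}$. Dividing by $w_{K}$ and collecting the volume and covolume constants yields $i_{C}(t)=\kappa\,t+\epsilon_{C}(t)$ with $\epsilon_{C}(t)=O(t^{1-1/n})$, where
\[
\kappa=\frac{2^{r_{1}}(2\pi)^{r_{2}}R_{K}}{w_{K}\sqrt{|d_{K}|}}
\]
depends only on $\mathcal{O}_{K}$ and not on the chosen class $C$, exactly as asserted.

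The main obstacle is the justification of the counting principle with the precise error exponent $1-\tfrac{1}{n}$, which is not formal. One must ensure simultaneously that $\mathcal{F}$ is a genuine fundamental domain for the unit action and that the resulting region $X(1)$ has a boundary tame enough for the estimate, namely that $\partial X(1)$ is covered by finitely many images of bounded Lipschitz maps from $[0,1]^{\,n-1}$. Granting this, the number of lattice points in a dilate differs from the volume by a quantity of the order of the dilated boundary measure, which gives the stated bound; verifying the Lipschitz structure of $\partial X(1)$, together with the interaction between the norm hypersurface and the fundamental cone, is the delicate technical heart of the argument.
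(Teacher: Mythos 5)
The paper never actually proves this theorem: it is stated as an imported result (it is the classical ideal-counting theorem, e.g.\ Theorem 39 in Marcus's \emph{Number Fields}) and is used only to feed Lemma 2.2.2 and the meromorphic continuation of $\zeta_{K}$. So there is no in-paper argument to compare against, and your proposal supplies the standard proof that the paper omits. Your outline is the right one and each reduction is correct: trading ideals $I\in C$ for nonzero $\alpha$ in a fixed ideal $\mathfrak{b}\in C^{-1}$ modulo units, with $\mathcal{N}(I)\le t$ becoming $|N_{K/\mathbb{Q}}(\alpha)|\le t\,\mathcal{N}(\mathfrak{b})$; passing to the Minkowski lattice of covolume proportional to $\sqrt{|d_{K}|}\,\mathcal{N}(\mathfrak{b})$; quotienting by the unit action via the logarithmic embedding; and exploiting degree-$n$ homogeneity of the norm so that $X(t)=t^{1/n}X(1)$ and the boundary error scales as $t^{(n-1)/n}=t^{1-1/n}$. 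The $\mathcal{N}(\mathfrak{b})$ appearing in the norm bound cancels against the one in the covolume, which is exactly why the resulting constant $\kappa=2^{r_{1}}(2\pi)^{r_{2}}R_{K}/\bigl(w_{K}\sqrt{|d_{K}|}\bigr)$ is independent of $C$, as the theorem requires.

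Two points you flag but do not execute are genuinely where all the work lies, and a complete proof must include them. First, the fundamental domain $\mathcal{F}$ for the free part of the unit group must be constructed so that it is literally a cone (closed under multiplication by positive reals), since otherwise $X(t)$ is not the exact dilate of $X(1)$ and the volume identity $\mathrm{vol}(X(t))=t\,\mathrm{vol}(X(1))$ fails; this is arranged by pulling back a fundamental parallelotope of the regulator lattice under the degree-zero part of the logarithm map. Second, the lattice-point principle with error $O(\lambda^{n-1})$ for the $\lambda$-dilate is valid only for regions whose boundary is covered by finitely many Lipschitz images of $[0,1]^{n-1}$, and verifying this for $\partial X(1)$ (the norm-one hypersurface intersected with the walls of the cone) requires an explicit parametrization. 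Neither step is a matter of routine bookkeeping, but both are standard and your sketch identifies them accurately; as an outline of the proof the paper should have cited or reproduced, it is correct.
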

			\begin{rmk}\label{rmk2}
		From the above theorem, we can conclude that, the partial sum of co-efficients in the \textit{Dedekind Zeta Function}, $\sum\limits_{n\leq t}j_{n}$ is $O(t)$. Therefore, by lemma \eqref{lemma1}, $\zeta_{K}$ converges and is analytic on the half-plane, $Re(s)>1$.
		\end{rmk}		
		\begin{lemma}\label{lemma2}
			\textit{Dedekind Zeta Function } $\zeta_{K}$ can be extended to a \textit{meromorphic function} on the half-plane,
			\begin{center}
				$Re(s)>1-\frac{1}{n}$, where, $n=[K:\mathbb{Q}]$,
			\end{center} 
		 where the function is analytic everywhere except at the point $s=1$, where it has a \textit{simple pole} of order 1.
			
		\end{lemma}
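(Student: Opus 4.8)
The plan is to convert the Dirichlet series for $\zeta_K(s)$ into an integral transform of the ideal–counting function by partial (Abel) summation, and then to isolate the single term responsible for the pole using the sharp asymptotics furnished by Theorem \eqref{thm2}. First I would write $J(t) = \sum_{n \le t} j_n$, which by Definition \eqref{def2} equals $i(t)$, the number of ideals of $\mathcal{O}_K$ of norm at most $t$. Summing the conclusion of Theorem \eqref{thm2} over the finitely many ideal classes $C$ gives
\[
J(t) \;=\; \rho\, t + E(t), \qquad \rho = h_K\, c_K, \quad E(t) = \sum_{C} \epsilon_C(t) = O\!\left(t^{1 - 1/n}\right),
\]
where $h_K$ is the (finite) number of ideal classes, $c_K$ is the common constant of Theorem \eqref{thm2}, and $n = [K : \mathbb{Q}]$.

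Next, for $\mathrm{Re}(s) > 1$, Abel summation of the coefficients $j_n$ against $t^{-s}$ yields $\sum_{n \le x} j_n n^{-s} = J(x)\, x^{-s} + s \int_1^x J(t)\, t^{-s-1}\, dt$. By Remark \eqref{rmk2} we have $J(x) = O(x)$, so the boundary term $J(x)\, x^{-s} = O(x^{1 - \mathrm{Re}(s)})$ tends to $0$; letting $x \to \infty$ produces the integral representation
\[
\zeta_K(s) \;=\; s \int_1^\infty J(t)\, t^{-s-1}\, dt, \qquad \mathrm{Re}(s) > 1.
\]
Substituting $J(t) = \rho t + E(t)$ and using $\int_1^\infty t^{-s}\, dt = 1/(s-1)$ for $\mathrm{Re}(s) > 1$, I obtain
\[
\zeta_K(s) \;=\; \frac{\rho\, s}{s-1} + s \int_1^\infty E(t)\, t^{-s-1}\, dt \;=\; \rho + \frac{\rho}{s-1} + s \int_1^\infty E(t)\, t^{-s-1}\, dt.
\]

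Finally, because $E(t) = O(t^{1-1/n})$, the integrand satisfies $\bigl|E(t)\, t^{-s-1}\bigr| = O\!\left(t^{-1/n - \mathrm{Re}(s)}\right)$, so the last integral converges absolutely, and uniformly on compact subsets of the half-plane $\mathrm{Re}(s) > 1 - 1/n$; it therefore defines a holomorphic function there. This exhibits $\zeta_K$ as meromorphic on $\mathrm{Re}(s) > 1 - 1/n$ with its only singularity a simple pole at $s = 1$ of residue $\rho$, which is exactly the assertion of the lemma.

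The step I expect to be the main obstacle is this last one: rigorously justifying that $s \mapsto s \int_1^\infty E(t)\, t^{-s-1}\, dt$ is holomorphic on the enlarged half-plane rather than merely convergent there. I would establish this by the same uniform–convergence principle already invoked for Lemma \eqref{lemma1}, writing the integral as a locally uniform limit of holomorphic integrals over $[1, R]$ and applying the Weierstrass Theorem \eqref{thm1} (equivalently Morera's theorem), with the tail bounds coming directly from the explicit $O(t^{1-1/n})$ estimate for $E(t)$; a secondary point requiring care is the passage to the limit $x \to \infty$ in the Abel summation identity, which is licensed by the same decay of the boundary term.
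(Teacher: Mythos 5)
Your proposal is correct, and it reaches the lemma by a genuinely different route from the paper. Both arguments rest on the same essential input, namely Theorem \eqref{thm2}, which gives $J(t)=\sum_{n\le t}j_n=\rho t+O(t^{1-1/n})$ with $\rho=h_K c_K$; the difference is in how the main term $\rho t$ is converted into the pole at $s=1$. The paper stays entirely inside the Dirichlet-series framework: it writes $\zeta_K(s)=\sum_{n\ge 1}(j_n-h\c{k})n^{-s}+h\c{k}\,\zeta(s)$, observes that the modified coefficients have partial sums $O(t^{1-1/n})$ so that Lemma \eqref{lemma1} gives holomorphy of the first series on $\mathrm{Re}(s)>1-\tfrac{1}{n}$, and then separately continues the Riemann zeta function to $\mathrm{Re}(s)>0$ via the alternating series $f(s)=(1-2^{1-s})\zeta(s)$ and $g(s)=(1-3^{1-s})\zeta(s)$ (the second series being needed to rule out spurious poles at $s_k=1+\tfrac{2k\pi i}{\log 2}$, $k\neq 0$). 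Your Abel-summation/integral representation $\zeta_K(s)=s\int_1^\infty J(t)t^{-s-1}\,dt$ replaces that entire continuation step by the explicit identity $s\int_1^\infty \rho t^{-s}\,dt=\rho+\tfrac{\rho}{s-1}$, so the pole and its residue $\rho$ appear immediately and in closed form --- a real simplification, and one that identifies the residue, which the paper never states. The price is that you must justify holomorphy of $s\mapsto s\int_1^\infty E(t)t^{-s-1}\,dt$ on the enlarged half-plane, which you correctly propose to do by locally uniform convergence of the truncated integrals together with Theorem \eqref{thm1} (or Morera); the paper avoids this by reusing Lemma \eqref{lemma1} for a Dirichlet series instead of an integral. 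Both proofs are sound; yours is arguably cleaner at the pole and the paper's is more self-contained relative to its own preceding lemmas.
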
	
		\begin{rmk}\label{rmk3}
			An analogue of the above statement is that, $(s-1)\zeta_{K}(s)$ is analytic on the entire half-plane, $Re(s)>1-\frac{1}{n}$.
		\end{rmk}	
		\begin{proof}
			Considering the series,
			\begin{center}
				$f(s)=1-\frac{1}{2^{s}}+\frac{1}{3^{s}}-\frac{1}{4^{s}}+......$,
			\end{center}
			we observe that, $f(s)$ converges to an analytic funtion on the entire half-plane, $Re(s)>0$ (Applying Lemma \eqref{lemma1}).\\
			Let, $s=x+iy\in \mathbb{C}$.
			Now, we can write,
			\begin{center}
				$f(s)=(1-2^{1-s})\zeta(s)$     \hspace{20pt}       for, $x>1$ (From Definition of $f(s)$).
			\end{center}
			where, $\zeta(s)$ is the \textit{Riemann Zeta Function} over $\mathbb{Q}$.\\
			Therefore, the function $\frac{f(s)}{1-2^{1-s}}$ is an extension of $\zeta$ to a \textit{meromorphic function} on the half-plane, $x>0$. Hence the function above can have poles, if,
			\begin{center}
				$1-2^{1-s}=0$, \hspace{20pt}  i.e., \hspace{20pt}  $s=1$.
			\end{center}
			Now, we can also observe that, $f(s)=0$ at the points,
			\begin{center}
				$s_{k}=1+\frac{2k\pi i}{log 2}$,\hspace{20pt}    $k=\pm 1,\pm 2,......$
			\end{center}
			Again, considering another series,
			\begin{center}
				$g(s)=1+\frac{1}{2^{s}}-\frac{2}{3^{s}}-\frac{1}{4^{s}}+\frac{1}{5^{s}}-\frac{2}{6^{s}}........$
			\end{center}
			similarly, we can deduce that,
			\begin{center}
				$g(s)=(1-3^{1-s})\zeta(s)$      \hspace{20pt}     for, $x>1$ (From Definition of $g(s)$).
			\end{center}
			and consequently, the function,  $\frac{f(s)}{1-3^{1-s}}$ is an extension of $\zeta$ to a meromorphic function on the half-plane, $x>0$ with poles at every such $s$ satisfying, $1-3^{1-s}=0$,  i.e.,
			\begin{center}
				$s'_{k}=1+\frac{2k\pi i}{log 3}$,\hspace{20pt}    $k=\pm 1,\pm 2,......$
			\end{center}
			Clearly, the set of points $s_{k}$ and $s'_{k}$ are disjoint, $k\neq 0$.\\
			Since, $\lim\limits_{s\rightarrow s_{k}^{+}}\zeta(s)$ is finite, therefore, we have, $\zeta(s)=\frac{f(s)}{1-2^{1-s}}$ does not really have poles at the points $s_{k}$, $k\neq0$, and hence the function is analytic everywhere except at $s=1$. \\
			Assuming this as the definition of the extension of $\zeta$ (also denoting the extension as $\zeta$), we can express $\zeta_{K}$ as, 
			\begin{center}
				$\zeta_{K}(s)=\sum\limits_{n=1}^{\infty}\frac{j_{n}-h\c{k}}{n^{s}}+h\c{k}\zeta(s)$,  for,  $Re(s)=x>1$
			\end{center}
			where,
			\begin{center}
			$h=$ Number of ideal classes in $\mathcal{O}_{K}$\\
			$\c{k}=$ The constant term (as mentioned in Theorem \eqref{thm2}).
			\end{center}
			Applying Theorem \eqref{thm2} and Lemma \eqref{lemma1}, we conclude that, $\zeta_{K}(s)$ can be described as a \textit{Dirichlet Series} with co-efficients, $j_{n}-h\c{k}$,   $\forall n$. Moreover, it converges to an analytic function on the half-plane,
			\begin{center}
				$x>1- \frac{1}{n}$, where, $n=[K:\mathbb{Q}]$
			\end{center}
			(It can be deduced that, the partial sum of the co-efficients, $\sum\limits_{n\leq t}(j_{n}-h\c{k})$ is $O(t^{1- \frac{1}{n}}$).)\\\\
			Combining above with the result obtained for the extension of $\zeta$, the proof is complete.
			\end{proof}
			\subsection{Other Definitions of $\zeta_{K}$}
			
			A priori using the notion of absolute convergence of $\zeta_{K}$, we can redefine $\zeta_{K}$ in the following manner :
			\begin{dfn}\label{def3}
				(Dedekind Zeta Function) We define the \textit{Dedekind Zeta function} as,
				\begin{center}
					$\zeta_{K}(s)=\sum\limits_{I\subset \mathcal{O}_{K}}\frac{1}{\mathcal{N(I)}^{s}}$,\hspace{20pt} for, $Re(s)>1$. 
				\end{center}
				where, we define, 
				\begin{center}
					$\mathcal{N(I)}=$ Norm of an ideal $I\subset\mathcal{O}_{K}$  $= | \mathcal{O}_{K}/I|$,
				\end{center}
			
				and, the sum is performed over every non-zero ideal $I$ of $\mathcal{O}_{K}$.
			\end{dfn}
			\begin{rmk}\label{rmk4}
				Also it is to be noted that the order of the summation is unspecified since it is not required.
			\end{rmk}
			We can further provide a third definition of the \textit{Dedekind Zeta Functions} as,
			\begin{dfn}\label{def4}
				(Dedekind Zeta Function) We define $\zeta_{K}$ as the following product,
				\begin{center}
					$\zeta_{K}(s)=\prod\limits_{\mathcal{P}\subset \mathcal{O}_{K}}(1-\frac{1}{\mathcal{N(P)}^{s}})^{-1}$,\hspace{20pt} for, $Re(s)>1$. 
					
				\end{center}
				where,
				\begin{center}
					$\mathcal{P}:=$ Prime ideal of $\mathcal{O}_{K}$.
				\end{center}
			\end{dfn}
			\begin{rmk}\label{rmk5}
				In definition \eqref{def4}, we are taking the product over every prime ideal $\mathcal{P}$ of $\mathcal{O}_{K}$.
			\end{rmk}
			\begin{rmk}\label{rmk6}
				The product defined above yields a finite value, since, for every prime ideal $\mathcal{P}\subset\mathcal{O}_{K}$, we have, $\mathcal{N(P)}>1$, hence each term in the product is finite.
			\end{rmk}
			
			At this juncture, one may be curious to ask,
			\begin{center}
				\textit{\textbf{Are these definitions of $\zeta_{K}$ equivalent to each other ?}}
			\end{center}
			The answer of course, is \textbf{YES}. We state the following proposition in order to establish our claim.
			\begin{prop}\label{prop1}
				Definitions \eqref{def2},\eqref{def3} and \eqref{def4} of $\zeta_{K}$ are equivalent.
			\end{prop}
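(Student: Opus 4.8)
The plan is to prove that Definitions \eqref{def2}, \eqref{def3}, and \eqref{def4} all agree on the half-plane $\mathrm{Re}(s) > 1$, where each of the three expressions converges absolutely. The absolute convergence of the Dirichlet series in Definition \eqref{def2} on this region was already established in Remark \eqref{rmk2} as a consequence of Theorem \eqref{thm2} and Lemma \eqref{lemma1}; this is the fact that licenses every rearrangement and regrouping of terms used below.

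First I would establish the equivalence of Definitions \eqref{def2} and \eqref{def3}, which is purely a matter of reindexing. The sum in Definition \eqref{def3} runs over all nonzero ideals $I \subset \mathcal{O}_K$, and I would group these ideals according to the value of their norm. By definition $j_n$ counts precisely those ideals $I$ with $\mathcal{N}(I) = n$, and each such ideal contributes the identical term $n^{-s}$; collecting the $j_n$ ideals of norm $n$ therefore converts $\sum_{I} \mathcal{N}(I)^{-s}$ into $\sum_{n \geq 1} j_n n^{-s}$. Because the series converges absolutely, this regrouping does not affect the value of the sum, so the two definitions agree.

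The substantive step is the equivalence of Definitions \eqref{def3} and \eqref{def4}, i.e.\ the Euler product expansion. The algebraic input is that $\mathcal{O}_K$ is a Dedekind domain, so every nonzero ideal factors uniquely as a product of prime ideals, together with the multiplicativity of the norm, $\mathcal{N}(IJ) = \mathcal{N}(I)\,\mathcal{N}(J)$. I would begin by expanding each local factor as a geometric series,
\[
\left(1 - \frac{1}{\mathcal{N}(\mathcal{P})^{s}}\right)^{-1} = \sum_{k=0}^{\infty} \frac{1}{\mathcal{N}(\mathcal{P})^{ks}} = \sum_{k=0}^{\infty} \frac{1}{\mathcal{N}(\mathcal{P}^{k})^{s}},
\]
which is valid since $\mathcal{N}(\mathcal{P}) > 1$ (see Remark \eqref{rmk6}). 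Forming the finite partial product over all prime ideals $\mathcal{P}$ with $\mathcal{N}(\mathcal{P}) \leq X$ and multiplying out, unique factorization guarantees that the resulting monomials are exactly the terms $\mathcal{N}(I)^{-s}$ as $I$ ranges over the ideals divisible only by primes of norm $\leq X$, each appearing exactly once. Letting $X \to \infty$, every ideal is eventually captured, and the passage to the limit produces the full sum $\sum_{I} \mathcal{N}(I)^{-s}$ of Definition \eqref{def3}.

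The main obstacle I anticipate is making the passage to the limit in the Euler product rigorous: I must verify that the infinite product converges to a nonzero value and that the difference between the partial product and the full Dirichlet series tends to $0$. Both follow from the absolute convergence guaranteed on $\mathrm{Re}(s) > 1$, since the terms omitted from the partial product form a subseries of an absolutely convergent series indexed by the ideals possessing a prime factor of norm exceeding $X$; this tail therefore tends to $0$ as $X \to \infty$. The unique factorization of ideals in $\mathcal{O}_K$ is what ensures the correspondence between the expanded monomials and the ideals of $\mathcal{O}_K$ is an exact bijection, with no term counted twice or omitted, and it is the interplay of this algebraic fact with the analytic control of the tail that constitutes the heart of the argument.
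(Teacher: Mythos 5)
Your proposal is correct and follows essentially the same route as the paper: the equivalence of Definitions \eqref{def2} and \eqref{def3} by grouping ideals according to their norm (so that the $j_n$ ideals of norm $n$ collapse into the single term $j_n n^{-s}$), and the equivalence of Definitions \eqref{def3} and \eqref{def4} by expanding each Euler factor as a geometric series and invoking unique factorization of ideals together with multiplicativity of the norm. The only difference is that you make the passage from the partial product over primes of norm at most $X$ to the infinite product rigorous via a tail estimate from absolute convergence, a step the paper's proof leaves implicit.
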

			\begin{proof}
				First, we shall establish that, definition \eqref{def3} implies definition \eqref{def2} :\\\\
				We choose $n\in \mathbb{N}$ and define the ideal class of $n$ as:
				\begin{center}
					$[n]:=\{\mathcal{I}\subset\mathcal{O}_{K} \mbox{ }|\mbox{ }\mathcal{N(I)}=n\}$
				\end{center}
				Then, we can observe that, by our previous definition of $j_{n}$, 
				\begin{center}
					$j_{n}=|[n]|$.
				\end{center}   
				Now, taking the sum over every such ideal class corresponding to eevery $n\in \mathbb{N}$, we obatin the definition \eqref{def2} of $\zeta_{K}$.\\\\
				Next, we shall prove that, definition \eqref{def4} implies definition \eqref{def3} :\\\\	
				For every prime ideal,  $\mathcal{P}\subset\mathcal{O}_{K}$, we have previously mentioned that,
				\begin{center}
					$\mathcal{N(P)}>1$ $\implies\frac{1}{\mathcal{N(P)}}<1$.
				\end{center}
				Expanding each term, $\left(1-\frac{1}{\mathcal{N(P)}}\right)^{-1}$ as an infinite series in terms of every such $\mathcal{N(P)}$,
				\begin{center}
					$\zeta_{K}(s)=\prod\limits_{\mathcal{P}}\left(1+\frac{1}{\mathcal{N(P)}}+\frac{1}{\mathcal{N(P)}^{2}}+\frac{1}{\mathcal{N(P)}^{3}}+......\right)$
				\end{center}
				Since, the norm map is multiplicative and every non-zero ideal is either a prime, or it can be expressed as a product of finite powers of prime ideals, therefore, expanding the product over all the prime ideals $\mathcal{P}$ of $\mathcal{O}_{K}$, we obtain the definition \eqref{def3} of $\zeta_{K}$.
				\end{proof}
		\vspace{10pt}
		\section{Density of Primes over Number Field Extensions}
		\vspace{10pt}
		Now, as we have already defined \textit{Dedekind Zeta Function} over a number field $K/\mathbb{Q}$, we proceed towards defining and observing different results regarding density of primes which split completely over a number field extension.\par
		
		\subsection{Important Definitions}
		\begin{dfn}\label{def5}
			(Partial Dedekind Zeta Functions) Let $K/\mathbb{Q}$ be a number field and, we denote,
			\begin{center}
				$\mathcal{A}:=$ Any set of primes of $\mathcal{O}_{K}$.
			\end{center}
			Then, the \textit{Dedekind Partial Zeta Functions} over $K$ is defined as:
			\begin{center}
				$\zeta_{K,\mathcal{A}}(s)=\sum\limits_{\mathcal{I}\in [\mathcal{A}]}\frac{1}{\mathcal{N(I)}^{s}}$ = $\prod\limits_{\mathcal{P}\in\mathcal{A}}\left(1-\frac{1}{\mathcal{N(\mathcal{P})}^{s}}\right)^{-1}$
			\end{center}
			where [$\mathcal{A}$] denotes the semigroup of ideals generated by $\mathcal{A}$; in other words, $\mathcal{I}\in \mathcal{A}$ iff, $\forall$ prime ideals, $\mathcal{P}|\mathcal{I}$  $\implies$  $\mathcal{P}\in \mathcal{A}$.
		\end{dfn}
		\begin{dfn}
			(Polar Density) We have, by definition of \textit{Partial Dedekind Zeta Functions},
			\begin{center}
				$\zeta_{K,\mathcal{A}}(s)=\prod\limits_{\mathcal{P}\in\mathcal{A}}\left(1-\frac{1}{\mathcal{N(\mathcal{P})}^{s}}\right)^{-1}$, \hspace{20pt}for $Re(s)>1$.
			\end{center}
			If, for some integral power $n\geq1$, $\zeta_{K,\mathcal{A}}^{n}$ can be extended to a meromorphic function in a neighbourhood of $s=1$, such that the function shall have a pole of order $m$ at $s=1$, then we define the \textit{Polar Density} of $\mathcal{A}$ to be, $\mathcal{D}=\frac{m}{n}$.
		\end{dfn}
		\begin{rmk}\label{rmk7}
			For example, it can be observed that, a finite set has polar density $\mathcal{D}=0$, and a set containing all but a finitely many elements has polar density $\mathcal{D}=1$.
		\end{rmk}
		\begin{rmk}\label{rmk8}
			If two sets of primes , $\mathcal{A}$ and $\mathcal{B}$ of the number field $K$ differ only by the primes $\mathcal{P}$ such that, for each such $\mathcal{P}$, $\mathcal{N}(\mathcal{P})$ is not a prime, then the polar density of the set $\mathcal{A}$ exists $\Leftrightarrow$ the polar density of $\mathcal{B}$ exists.\par 
			Furthermore, if $\mathcal{D_{A}}$ and, $\mathcal{D_{B}}$ be the \textit{polar densities} of the sets $\mathcal{A}$ and $\mathcal{B}$ respectively, then we shall have,
			\begin{center}
				$\mathcal{D_{A}}=\mathcal{D_{B}}$.
			\end{center}
			(Since the corresponding zeta functions differ by a factor which is analytic and non-zero in the neighbourhood of $s=1$).
		\end{rmk}		
			Having all the neccessary tools we need, we can now study about the corresponding densities of sets of primes that split completely over a finite field extension.
			\subsection{The Main Theorem}

		\begin{thm}\label{thm3}
			 Suppose we consider $L$ and $K$ to be two number fields over $\mathbb{Q}$ such that, the extension, $L/K$ is normal. Then, the set of primes in $K$ which splits completely in $L$ has polar density, $\mathcal{D}=$ $\frac{1}{n}$, where, $n=[L:K]$.
		\end{thm}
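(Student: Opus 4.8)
\emph{Proof proposal.} The plan is to transfer the problem from the base field $K$ up to the normal extension $L$, where the pole of the full Dedekind zeta function is already understood by Lemma \ref{lemma2}, and then descend through an Euler-product identity. Write $\mathcal{S}$ for the set of primes $\mathfrak{p}$ of $\mathcal{O}_K$ that split completely in $L$, and let $\mathcal{A}$ denote the set of all primes $\mathfrak{P}$ of $\mathcal{O}_L$ lying above some $\mathfrak{p}\in\mathcal{S}$. The decisive structural input is the identity
\[
\zeta_{L,\mathcal{A}}(s)=\zeta_{K,\mathcal{S}}(s)^{\,n},\qquad n=[L:K],
\]
which I would read off directly from Definition \ref{def5}. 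Indeed, if $\mathfrak{p}\in\mathcal{S}$ then by the meaning of complete splitting there are exactly $n$ primes $\mathfrak{P}_1,\dots,\mathfrak{P}_n$ of $\mathcal{O}_L$ over $\mathfrak{p}$, each with residue degree $1$ over $\mathfrak{p}$, so that $\mathcal{N}(\mathfrak{P}_i)=\mathcal{N}(\mathfrak{p})$ for every $i$. Grouping the Euler factors of $\zeta_{L,\mathcal{A}}$ according to the prime of $K$ they sit over then contributes exactly $\left(1-\mathcal{N}(\mathfrak{p})^{-s}\right)^{-n}$ for each $\mathfrak{p}\in\mathcal{S}$, which is the displayed identity.

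Granting this, the theorem reduces to showing that $\mathcal{A}$ has polar density $1$, that is, that $\zeta_{L,\mathcal{A}}$ has a simple pole at $s=1$. For then the identity exhibits $\zeta_{K,\mathcal{S}}(s)^{\,n}=\zeta_{L,\mathcal{A}}(s)$ as a function that extends meromorphically to a neighbourhood of $s=1$ with a pole of order $m=1$, and the definition of polar density yields $\mathcal{D}=m/n=1/n$ immediately. To compute the polar density of $\mathcal{A}$, I would compare it with the set $\mathcal{B}$ of all primes $\mathfrak{P}$ of $\mathcal{O}_L$ of absolute degree $1$ over $\mathbb{Q}$, i.e.\ those with $\mathcal{N}(\mathfrak{P})$ a rational prime. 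By Lemma \ref{lemma2} applied to $L$, the function $\zeta_L$ has a simple pole at $s=1$; since the primes of degree $\ge 2$ contribute an Euler subproduct that converges (hence is analytic and non-vanishing) near $s=1$, the set $\mathcal{B}$ also has polar density $1$, which is the mechanism recorded in Remark \ref{rmk8}.

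The heart of the argument, and the one place where normality of $L/K$ is indispensable, is to show that $\mathcal{A}$ and $\mathcal{B}$ differ only by a set of polar density $0$. On one side, any $\mathfrak{P}\in\mathcal{A}$ of absolute degree $\ge 2$ has $\mathcal{N}(\mathfrak{P})$ equal to a proper prime power, so by Remark \ref{rmk8} such primes leave the polar density unchanged; hence $\mathcal{A}$ and $\mathcal{A}\cap\mathcal{B}$ have the same polar density. On the other side, take $\mathfrak{P}\in\mathcal{B}$ lying over $\mathfrak{p}$ in $K$: absolute degree $1$ forces the relative residue degree $f(\mathfrak{P}/\mathfrak{p})=1$. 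If $\mathfrak{p}$ is unramified in $L$, then because $L/K$ is normal (hence Galois in characteristic zero) the group $\Gal(L/K)$ permutes the primes over $\mathfrak{p}$ transitively and their decomposition groups are conjugate, so all of them share the common residue degree $1$ and ramification index $1$; thus $\mathfrak{p}\in\mathcal{S}$ and $\mathfrak{P}\in\mathcal{A}$. Consequently $\mathcal{B}\setminus\mathcal{A}$ is contained in the finite set of primes of $\mathcal{O}_L$ lying over the finitely many primes of $K$ that ramify in $L$, and a finite set has polar density $0$ by Remark \ref{rmk7}. Combining the two comparisons gives that $\mathcal{A}$ and $\mathcal{B}$ have the same polar density, namely $1$, which closes the reduction.

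The main obstacle I anticipate is precisely this last comparison. One must be scrupulous in reconciling ``splits completely in $L$'', a condition measured relative to $K$, with ``absolute degree $1$ over $\mathbb{Q}$'': the two agree only after discarding the degree-$\ge 2$ primes (handled by Remark \ref{rmk8}) and the finitely many ramified primes (handled by Remark \ref{rmk7}). Crucially, the implication ``one prime of relative residue degree $1$ above $\mathfrak{p}$ $\Rightarrow$ $\mathfrak{p}$ splits completely'' is simply false without the Galois hypothesis, so the conjugacy of decomposition groups is doing the essential work. Everything else is routine bookkeeping with Euler products and the density conventions of Remarks \ref{rmk7} and \ref{rmk8}.
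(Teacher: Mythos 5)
Your proposal is correct and follows essentially the same route as the paper: the Euler-product identity $\zeta_{L,\mathcal{B}}=\zeta_{K,\mathcal{A}}^{\,n}$ coming from complete splitting, followed by comparing $\zeta_{L,\mathcal{B}}$ with $\zeta_L$ up to a factor analytic and non-vanishing near $s=1$. In fact you make explicit a step the paper leaves implicit, namely that normality (conjugacy of decomposition groups) is what forces an unramified degree-one prime of $L$ to lie over a completely split prime of $K$, so that $\mathcal{B}$ differs from the degree-one primes only by a finite set.
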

		\begin{proof}
			Define,
			\begin{center}
				$\mathcal{A}=\{\mathfrak{p}\subset\mathcal{O}_{K}\hspace*{10pt}| \mbox{ } \mathfrak{p} \mbox{ is prime, \& } \mathfrak{p} \mbox{ splits completely in }L\}$,\\
				$\mathcal{B}=\{\wp\subset\mathcal{O}_{L}\hspace*{10pt}| \mbox{ } \mathfrak{p} \mbox{ is prime, \& } \wp|\mathfrak{p} \mbox{ for some }\mathfrak{p}\in\mathcal{A}\}$.
			\end{center}
		
			Clearly, we can say that, the set $\mathcal{A}$ is infinite (Follows from the fact that, there exists infinitely many primes in a number field $K$ which splits completely over the extension $L$). Hence, the density of $\mathcal{A}$ is non-zero.\\
			Let, $\mathfrak{p}\in\mathcal{A}$ such that, $\exists$ primes, $\wp_{1},\wp_{2},........,\wp_{g}\in \mathcal{B}$ (say)  and,
			\begin{center}
				$\mathfrak{p}=\wp_{1}\wp_{2}........\wp_{g}$
			\end{center}
			Since, $\mathfrak{p}\in\mathcal{A}$ $\implies$ $\mathfrak{p}$ splits completely in $L$, hence, we have, for each, $1\leq i\leq g$, \\
			The inertial degree, 
			\begin{center}
				$f_{i}=$ $[{\mathcal{O}_{L}/\wp_{i}}:{\mathcal{O}_{K}/\mathfrak{p}}]$ $=1$, \hspace*{20pt} $\forall$  $1\leq i\leq g$.\\
				$\implies$ $|\mathcal{O}_{L}/\wp_{i}|=|\mathcal{O}_{K}/\mathfrak{p}|$, for every $1\leq i\leq g$.\\
				$\implies$ $\mathcal{N}(\wp_{i})=\mathcal{N}(\mathfrak{p})$, for every  $1\leq i\leq g$.
			\end{center}
			Again, by a well-known result, we know that, if $e_{i}$ be the \textit{ramification degree} and $f_{i}$ be the corresponding \textit{inertial degree} of each $\wp_{i}$ for every $1\leq i\leq g$, then,
			\begin{center}
				$\sum\limits_{i=1}^{g}e_{i}f_{i}=n=[L:K]$.
			\end{center}
			But, Since $\mathfrak{p}$ splits completely in $L$, hence, $e_{i}=f_{i}=1$  for every $1\leq i\leq g$.
			\begin{center}
				$\implies$  $g=n$.	
			\end{center}
			Similarly, we can obtain similar sort of prime decomposition in $L$.\\
			Therefore, we obtain,
			\begin{center}
				$\zeta_{K,\mathcal{A}}^{n}=\zeta_{L,\mathcal{B}}$.
			\end{center}
			Thus, it only suffices to prove that, $\zeta_{L,\mathcal{B}}$ has a pole of order $1$ at $s=1$.\\\\
			Here, we clearly observe that, $\wp\in \mathcal{B}$ $\implies$ $\mathcal{N}(\wp)$ is prime, and this statement holds for all primes $\wp$ except finitely many, which are ramified over $K$. \par  Therefore,
			\begin{center}
				$\zeta_{L}=\zeta_{L,\mathcal{B}}.H(s)$
			\end{center}
			where, $H(s)$ is a finite factor which is an analytic function and non-zero in a neighbourhood of $s=1$. Therefore, our claim is established and, hence, $\zeta_{K,\mathcal{A}}^{n}$ can be extended to a meromorphic function in a neighbourhood of $s=1$, having pole of order $1$ at $s=1$, hence the polar density of $\mathcal{A}$ is ,
			\begin{center}
				$\mathcal{D_{A}}=\mathcal{D}=\frac{1}{n}=\frac{1}{[L:K]}$.
			\end{center}
			And we have proved our desired result.
			\end{proof}
			\subsection{Some Special Cases}
			As a corollary to the above theorem, we state this result with the intention of obtaining the density of primes in a number field $K$ that split completely in an extension field, say $L$, which is not normal over $K$.
			\begin{cor}\label{cor1}
				Let $L/K$ be any finite extension of number fields with $[L:K]=n$ (say) and, we define, 
				\begin{center}
					$M=$ Normal Closure of $L/K$\\
					\hspace{20pt}$=\overline{L_{N}}$  (say)
				\end{center}
				Then, the set of primes in $K$ that splits completely over $M$ has polar density,
				\begin{center}
					$\mathcal{D}=\frac{1}{[M:K]}$  . 
				\end{center}
			\end{cor}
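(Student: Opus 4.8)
The plan is to reduce the statement to the Main Theorem by exploiting the defining feature of a normal closure. Since $M$ is by construction the normal closure of $L/K$, the extension $M/K$ is normal. Hence Theorem~\eqref{thm3} applies verbatim to the pair $M/K$, and the set of primes of $K$ splitting completely in $M$ has polar density $\frac{1}{[M:K]}$. At this level the corollary is essentially immediate; the only genuine input is the standard fact that the normal closure is normal over the base field $K$.

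The substance of the result, in line with the motivation preceding it, lies in relating complete splitting in the non-normal field $L$ to complete splitting in $M$. The plan for this is to prove the equivalence: a prime $\mathfrak{p}\subset\mathcal{O}_K$ splits completely in $L$ if and only if it splits completely in $M$. First I would set $G=\Gal(M/K)$ and realise $M$ as the compositum of the finitely many $K$-conjugate fields $\sigma(L)$, $\sigma\in G$. If $\mathfrak{p}$ splits completely in $L$, then applying each $\sigma$ shows $\mathfrak{p}$ splits completely in every conjugate $\sigma(L)$; since a prime splits completely in a compositum exactly when it does so in each factor, $\mathfrak{p}$ splits completely in $M$. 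The converse direction follows from the downward transitivity of complete splitting in the tower $K\subseteq L\subseteq M$.

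Granting this equivalence, the sets of primes of $K$ splitting completely in $L$ and in $M$ coincide outside the finitely many primes ramifying in $M$. By Remark~\eqref{rmk8} such sets share the same polar density, so the density of primes splitting completely in $L$ is likewise $\frac{1}{[M:K]}$, as intended.

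The hard part will be the compositum step in the equivalence: justifying that complete splitting of $\mathfrak{p}$ in each conjugate $\sigma(L)$ forces complete splitting in $M$. This amounts to controlling the ramification indices $e_i$ and inertial degrees $f_i$ through the compositum, using their multiplicativity in towers together with the relation $\sum_i e_i f_i=[M:K]$ invoked in the proof of Theorem~\eqref{thm3}. The remaining ingredients—normality of the normal closure and the downward transitivity of complete splitting—are routine and may be quoted.
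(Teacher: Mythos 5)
Your proposal is correct and follows essentially the same route as the paper: apply Theorem~\ref{thm3} to the normal extension $M/K$, then prove the equivalence of complete splitting in $L$ and in $M$ by realising $M$ as the compositum of the conjugates $\sigma(L)$ and using the fact that a prime splits completely in a compositum iff it does so in each factor (this is exactly the paper's Lemma~\ref{lemma3}). The only cosmetic difference is your appeal to Remark~\ref{rmk8} for finitely many exceptional primes, which is not actually needed since the two sets of completely split primes coincide exactly.
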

		\begin{proof}
			Clearly, here, we can observe that, $M/K$ is a normal extension. Let us define the set ,\\
			$\mathcal{A}=\{\mathfrak{p}\subset\mathcal{O}_{K}\hspace*{10pt}|\mbox{ } \mathfrak{p}\mbox{ is prime, \& }\mathfrak{p}\mbox{ splits completely in }M \}$.\\
			Then, applying the result proved in Theorem \eqref{thm3}, we can deduce that, the polar density of $\mathcal{A}$ is , $\mathcal{D_{A}}=$ $\frac{1}{[M:K]}$.\\
			Thus, our only objective is to prove that, the primes in $K$ which split completely over $M$ will also split completely over $L$.\\
			$\Leftrightarrow$ A prime will split completely in $L$ iff, it splits completely in $M$.\\
			Here, we prove an important result:
			\begin{lemma}\label{lemma3}
				Let, $L/K$ be a finite extension ( $[L:K]=n(say)
				$ ) of number fields over $\mathbb{Q}$, and $\mathfrak{p}$ be a prime in $K$. If $\mathfrak{p}$ is unramified or splits completely in $L$, then the same holds in the normal closure $M$ of $L/K$.
			\end{lemma}
			\begin{proof}
				We know that, the normal closure is the composite field of subfields $\sigma L$, where, \\ $\sigma\in Gal(M/L)$, i.e.,
				\begin{center}
					$M=\prod\limits_{\sigma\in Gal(M/L)}\sigma L$
				\end{center}
				Let, $\mathfrak{p}\subset\mathcal{O}
				_{K}$ be a prime in $K$ which splits completely over $L$.\\
				Then, $\exists$ primes,  $\mathfrak{P}_{1},\mathfrak{P}_{2},....,\mathfrak{P}_{g}$ in $\mathcal{O}_{L}$ such that,
				\begin{center}
					$\mathfrak{p}\mathcal{O}_{L}={{\mathfrak{P}_{1}}}{{\mathfrak{P}_{2}}}........{{\mathfrak{P}_{g}}}$
				\end{center}
				Under any automorphism map $\sigma\in Gal(M/L)$, we get, from above,
				\begin{center}
					$\mathfrak{p}\mathcal{O}_{\sigma L}={({\sigma\mathfrak{P}_{1}}})({{\sigma\mathfrak{ P}_{2}}})........({{\sigma\mathfrak{P}_{g}}})$
				\end{center}
				(Since $\mathfrak{p}$ is invariant under $\sigma$.)\\
				Since, $\mathfrak{p}$ splits completely over $L$, therefore, we obtain that,  $\mathfrak{p}$ splits completely in $M$.\\
				Conversely, applying similar process, we can conclude that, if a prime splits completely in $M/K$, then it splits completely in $L/K$ also.\\
				Hence our Lemma \eqref{lemma3} is proved.
			\end{proof}
				Applying Lemma \eqref{lemma3}, we have proved the Corollary \eqref{cor1} .
			\end{proof}
			\begin{cor}\label{cor2}
				$K/\mathbb{Q}$ is a number field, and, $f$ is a monic and irreducible polynomial over $\mathcal{O}_{K}$. Define,
				\begin{center}
					$\mathcal{A}=$ $\{\mathfrak{p}\in \mathcal{O}_{K}\mbox{ }|\mbox{ }f\mbox{ splits into linear factors over }\mathcal{O}_{K}/\mathfrak{p}\}$
				\end{center}
				If $L/K$ is the splitting field of $f$, then, the set $\mathcal{A}$ has polar density,
				\begin{center}
					$\mathcal{D_{A}}=\frac{1}{[L:K]}$
				\end{center}
			\end{cor}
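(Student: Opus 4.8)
The plan is to deduce this from the Main Theorem (Theorem \eqref{thm3}) by showing that, apart from a finite set of primes, the set $\mathcal{A}$ coincides with the set of primes of $K$ that split completely in the splitting field $L$ of $f$.

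First I would record that $L$, being the splitting field of $f$ over $K$ in characteristic zero, is a normal (indeed Galois) extension of $K$. Hence Theorem \eqref{thm3} applies directly to
\[
\mathcal{S} = \{\mathfrak{p} \subset \mathcal{O}_K \mid \mathfrak{p} \text{ prime and splits completely in } L\},
\]
giving $\mathcal{D}_{\mathcal{S}} = \frac{1}{[L:K]}$. Therefore it suffices to prove that $\mathcal{A}$ and $\mathcal{S}$ differ by only finitely many primes, and then to transport the polar density across this difference.

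The core step is the equivalence, valid for all but finitely many $\mathfrak{p}$, that $f$ splits into linear factors over $\mathcal{O}_K/\mathfrak{p}$ if and only if $\mathfrak{p}$ splits completely in $L$. I would discard the finitely many primes dividing the discriminant $\Delta(f)$ together with those ramifying in $L$; for every remaining $\mathfrak{p}$ the roots of $f$ are distinct modulo any prime above $\mathfrak{p}$, and $\mathfrak{p}$ is unramified in $L$. Fix a prime $\wp \subset \mathcal{O}_L$ above $\mathfrak{p}$. If $\mathfrak{p} \in \mathcal{S}$, then the residue extension is trivial, $\mathcal{O}_L/\wp \cong \mathcal{O}_K/\mathfrak{p}$; since all roots of $f$ lie in $L$, their reductions lie in $\mathcal{O}_K/\mathfrak{p}$, so $f$ splits into linear factors there and $\mathfrak{p} \in \mathcal{A}$. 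Conversely, if $\mathfrak{p} \in \mathcal{A}$, the Frobenius element at $\wp$ fixes the reduction of every root of $f$; as $L = K(\text{roots of } f)$ and the elements of $\mathcal{O}_L/\wp$ fixed by Frobenius are exactly $\mathcal{O}_K/\mathfrak{p}$, the Frobenius acts trivially on $L$, so the residue degree is $1$ and $\mathfrak{p}$ splits completely, i.e. $\mathfrak{p} \in \mathcal{S}$.

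With the equivalence in hand, $\mathcal{A}$ and $\mathcal{S}$ agree outside a finite set of primes. By the reasoning of Remark \eqref{rmk7} and Remark \eqref{rmk8}, adjoining or deleting finitely many Euler factors alters $\zeta_{K,\mathcal{A}}$ only by a factor analytic and non-vanishing near $s=1$, so the polar density is unchanged; hence $\mathcal{D}_{\mathcal{A}} = \mathcal{D}_{\mathcal{S}} = \frac{1}{[L:K]}$. The main obstacle I expect is making the central equivalence rigorous: one must invoke the Frobenius element (equivalently the Dedekind--Kummer factorization theorem) to pass from the purely residue-field statement ``$f$ splits into distinct linear factors mod $\mathfrak{p}$'' to the assertion that every prime of $L$ above $\mathfrak{p}$ has residue degree one, using crucially that $L$ is generated by \emph{all} the roots of $f$ together and that $\mathfrak{p}$ is unramified.
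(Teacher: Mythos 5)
Your proposal is correct, but it takes a different route from the paper. The paper's proof fixes a \emph{single} root $\alpha$ of $f$, invokes the Dedekind--Kummer correspondence to identify (up to finitely many exceptions) the primes $\mathfrak{p}$ for which $f$ splits into linear factors modulo $\mathfrak{p}$ with the primes that split completely in the non-normal intermediate field $K[\alpha]$, and then passes to the splitting field $L$ as the normal closure of $K[\alpha]/K$ via Corollary \eqref{cor1} (equivalently Lemma \eqref{lemma3}: a prime splits completely in an extension iff it splits completely in its normal closure). You instead bypass $K[\alpha]$ and Corollary \eqref{cor1} entirely: since $L/K$ is itself normal, you apply Theorem \eqref{thm3} directly to $L$ and prove by hand, using the Frobenius element and the distinctness of the reduced roots away from the discriminant, that outside a finite set the condition ``$f$ splits into linear factors mod $\mathfrak{p}$'' is equivalent to ``$\mathfrak{p}$ splits completely in $L$,'' then transport the density across the finite discrepancy via Remark \eqref{rmk8}. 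The trade-off is that the paper's argument reuses machinery already established (and hides the arithmetic input inside a citation of ``a well-known theorem''), whereas yours is self-contained but must supply that arithmetic input explicitly; your version is also more careful on a point the paper elides, namely that the equivalence holds for \emph{all but finitely many} primes (not merely ``infinitely many''), which is what is actually needed to compare polar densities. One small point to make fully rigorous in your converse direction: the inference from ``Frobenius fixes the reduction of every root'' to ``Frobenius fixes every root of $f$ in $L$, hence is trivial'' uses that the reductions of the roots modulo $\wp$ are pairwise distinct, which is exactly why you must discard the primes dividing the discriminant of $f$; it is worth stating that dependence explicitly rather than leaving it implicit.
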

			\begin{proof}
				Fixing one of the roots of the polynomial $f$, and denoting it by $\alpha$ (say), we intend to observe the primes $\mathfrak{p}$ $\in K$ which splits in the extension $K[\alpha]/K$.\\
				Applying a well-known theorem, we can say that, for infinitely many primes $\mathfrak{p}\in K$, $\mathfrak{p}$ splits completely in $K[\alpha]$  $\Longleftrightarrow$  $f$ splits into linear factors over $\mathcal{O}_{K}/\mathfrak{p}$.\\
				Since, $L/K[\alpha]/K$ is the normal closure of $K[\alpha]$, hence, applying corollary \eqref{cor1}, we obtain, by definition of the set $\mathcal{A}$,
				\begin{center}
					$\mathcal{D_{A}}=\frac{1}{[L:K]}$
				\end{center}
				And the statement of the corollary \eqref{cor2} is established.
			\end{proof}
			
			\begin{cor}\label{cor3}
				Let, $H<\mathbb{Z}_{m}^{\times} $, then, the set,
				\begin{center}
					$\mathcal{A}=\{\mathfrak{p}\in \mathbb{Z}\mbox{ }| \mbox{ }\mathfrak{p} \mbox{ is a prime, \& \hspace{10pt}}\mathfrak{p}(mod\mbox{ }m)=\overline{\mathfrak{p}}\in H\}$
				\end{center}
				has polar density,
				\begin{center}
					$\mathcal{D_{A}}=\frac{|H|}{\varphi(m)}$.
				\end{center}
				\textbf{Note: }$\overline{\mathfrak{p}}$ denotes the congruence class of $\mathfrak{p}(mod\mbox{ }m)$.
			\end{cor}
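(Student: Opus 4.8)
The plan is to realise $\mathcal{A}$, up to a harmless finite set of primes, as the set of rational primes that split completely in a suitable abelian extension of $\Q$, and then to apply Theorem \eqref{thm3} with base field $\Q$. Take $L = \Q(\zeta_m)$ to be the $m$-th cyclotomic field. First I would recall the two standard facts that drive the whole argument: the extension $L/\Q$ is Galois with
\[
\Gal(L/\Q) \iso \Z_m^{\times}, \qquad \sigma_a \colon \zeta_m \mapsto \zeta_m^{a} \;\longmapsto\; (a \bmod m),
\]
so that $[L:\Q] = |\Z_m^{\times}| = \varphi(m)$; and for every rational prime $p \nmid m$ the prime $p$ is unramified in $L$ and its Frobenius automorphism corresponds, under this same isomorphism, exactly to the residue class $p \bmod m$.

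Next I would pass from the subgroup $H$ to its fixed field. Since $\Z_m^{\times}$ is abelian, every subgroup is normal, so by the Galois correspondence the fixed field $K := L^{H}$ satisfies $\Gal(L/K) = H$, the extension $K/\Q$ is itself normal, and
\[
[K:\Q] = [\Gal(L/\Q) : H] = \frac{\varphi(m)}{|H|}.
\]
The key step is then the splitting criterion: for a prime $p \nmid m$, the prime $p$ splits completely in $K = L^{H}$ if and only if its decomposition group is contained in $H$, equivalently (as $\Gal(L/\Q)$ is abelian and the decomposition group is generated by the Frobenius) if and only if $\mathrm{Frob}_p \in H$, which by the identification above means precisely $p \bmod m \in H$.

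With this in hand the conclusion is short. The set of primes of $\Q$ splitting completely in $K$ and the set $\mathcal{A}$ agree except for the finitely many primes dividing $m$, which are the ramified ones; since a finite set has polar density $0$, by Remarks \eqref{rmk7} and \eqref{rmk8} the two sets have the same polar density. Applying Theorem \eqref{thm3} to the normal extension $K/\Q$ gives
\[
\mathcal{D}_{\mathcal{A}} = \frac{1}{[K:\Q]} = \frac{|H|}{\varphi(m)},
\]
as claimed. I expect the main obstacle to be the rigorous justification of the Frobenius identification and the splitting criterion in the second paragraph, i.e.\ the precise link between the arithmetic condition $p \bmod m \in H$ and complete splitting in $L^{H}$; the reduction to Theorem \eqref{thm3} and the degree bookkeeping are then routine, and one could alternatively phrase the last step through Corollary \eqref{cor1} applied to $K=L^H$.
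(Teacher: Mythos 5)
Your proposal is correct and follows essentially the same route as the paper: pass to the cyclotomic field $\mathbb{Q}(\zeta_m)$ with Galois group $\mathbb{Z}_m^{\times}$, take the fixed field of $H$, use the splitting criterion $\mathfrak{p}\nmid m$ splits completely $\Leftrightarrow \overline{\mathfrak{p}}\in H$, and apply Theorem \eqref{thm3}. The only difference is that you spell out the Frobenius justification, the degree computation $[L^{H}:\mathbb{Q}]=\varphi(m)/|H|$, and the disposal of the finitely many ramified primes, all of which the paper leaves implicit.
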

			\begin{proof}
				Consider the cyclotomic extension, $\mathbb{Q}(\zeta_{m})/\mathbb{Q}$, where, $\zeta_{m}$ is the primitive $m^{th}$ root of unity, which is a normal extension, as well as separable. Thus it has Galois Group, $Gal(\mathbb{Q}(\zeta_{m})/\mathbb{Q})=\mathbb{Z}_{m}^{\times}$.\\
				Suppose , we define $L$ to be the fixed field of $H$, then we have, for any prime, $\mathfrak{p} \nmid m$, $\mathfrak{p}$ splits completely in $L$ $\Leftrightarrow$ $\overline{\mathfrak{p}}\in H$. Hence, the result follows.
			\end{proof}
			\begin{cor}\label{cor4}
				A normal extension of a number field $K$ can be uniquely characterized by the set, say $\mathcal{A}$ of primes $\mathfrak{p}\in K$, which split completely in it.\\
				An equivalent statement of the above corollary is, $\exists$ a one-one inclusion-reversing correspondence between the normal extension $L/K$ and the set of primes defined above as $\mathcal{A}$.
			\end{cor}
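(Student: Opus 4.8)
The plan is to recast the corollary as an injectivity-plus-order-reversing property of the assignment $L \mapsto \mathcal{A}(L)$, where $\mathcal{A}(L)$ denotes the set of primes $\mathfrak{p} \subset \mathcal{O}_K$ that split completely in a normal extension $L/K$, and then to extract the field-theoretic conclusions $L_1 = L_2$ and $L_1 \subseteq L_2$ purely from equalities of polar densities furnished by Theorem \eqref{thm3}. Throughout I would read equality and containment of the sets $\mathcal{A}(L)$ as holding up to a set that does not affect polar density; by Remark \eqref{rmk7} (finite sets have density $0$) and Remark \eqref{rmk8}, the finitely many ramified primes and the primes of non-prime norm may be discarded without changing any density.

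First I would record two structural facts. (i) If $K \subseteq L_1 \subseteq L_2$, then every prime of $K$ splitting completely in $L_2$ also splits completely in $L_1$; this follows from multiplicativity of the ramification and inertia degrees in the tower $L_2/L_1/K$, so $\mathcal{A}(L_2) \subseteq \mathcal{A}(L_1)$, i.e. the map is inclusion-reversing. (ii) For two normal extensions $L_1, L_2$ of $K$ the compositum $L_1 L_2$ is again normal over $K$, and a prime $\mathfrak{p}$ splits completely in $L_1 L_2$ if and only if it splits completely in both $L_1$ and $L_2$; equivalently $\mathcal{A}(L_1 L_2) = \mathcal{A}(L_1) \cap \mathcal{A}(L_2)$. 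Fact (ii) is the arithmetic heart of the argument and I expect it to be the main obstacle: one direction is again the tower property, while the converse requires showing that complete splitting in each factor forces complete splitting in the compositum. I would obtain this from the behaviour of decomposition groups under the natural injection $\Gal(L_1 L_2/K) \hookrightarrow \Gal(L_1/K) \times \Gal(L_2/K)$, using that $\mathfrak{p}$ splits completely in a normal extension exactly when its decomposition group (Frobenius) is trivial.

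With these in hand, injectivity follows quickly. Suppose $L_1, L_2$ are normal over $K$ with $\mathcal{A}(L_1) = \mathcal{A}(L_2)$. Then by (ii), $\mathcal{A}(L_1 L_2) = \mathcal{A}(L_1) \cap \mathcal{A}(L_2) = \mathcal{A}(L_1)$, so Theorem \eqref{thm3}, applied to the normal extension $L_1 L_2 / K$, gives
\[
\frac{1}{[L_1 L_2 : K]} = \mathcal{D}\bigl(\mathcal{A}(L_1 L_2)\bigr) = \mathcal{D}\bigl(\mathcal{A}(L_1)\bigr) = \frac{1}{[L_1 : K]},
\]
whence $[L_1 L_2 : K] = [L_1 : K]$ and therefore $L_2 \subseteq L_1$. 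By symmetry $L_1 \subseteq L_2$, so $L_1 = L_2$. This shows that a normal extension $L/K$ is uniquely determined by its set $\mathcal{A}(L)$ of completely split primes.

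Finally, to upgrade injectivity to a genuine inclusion-reversing correspondence I would verify the converse of (i): if $\mathcal{A}(L_2) \subseteq \mathcal{A}(L_1)$ then $L_1 \subseteq L_2$. Indeed the containment gives $\mathcal{A}(L_1 L_2) = \mathcal{A}(L_1) \cap \mathcal{A}(L_2) = \mathcal{A}(L_2)$, and comparing densities via Theorem \eqref{thm3} yields $[L_1 L_2 : K] = [L_2 : K]$, i.e. $L_1 L_2 = L_2$ and hence $L_1 \subseteq L_2$. Combined with (i) this gives the equivalence $L_1 \subseteq L_2 \Leftrightarrow \mathcal{A}(L_2) \subseteq \mathcal{A}(L_1)$, which is precisely the claimed one-one inclusion-reversing correspondence between normal extensions $L/K$ and their sets $\mathcal{A}$ of completely split primes. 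The only genuinely delicate point, as noted, is the compositum criterion (ii); everything else is bookkeeping with densities from Theorem \eqref{thm3} and the remarks on polar density.
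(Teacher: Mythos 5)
Your proposal follows essentially the same route as the paper: form the compositum $M = LL'$, observe that its set of completely split primes coincides with the common set $\mathcal{A}$, and compare polar densities via Theorem \eqref{thm3} to force $[M:K]=[L:K]=[L':K]$ and hence $L=L'$. The only difference is that you actually supply a proof (via decomposition groups under $\Gal(LL'/K)\hookrightarrow \Gal(L/K)\times\Gal(L'/K)$) of the compositum criterion $\mathcal{A}(LL')=\mathcal{A}(L)\cap\mathcal{A}(L')$, which the paper invokes only with the phrase ``by theorem,'' and you additionally verify the inclusion-reversing half of the correspondence, which the paper asserts without argument.
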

		\begin{proof}
			Suppose, $L/K$ and $L'/K$ be two normal extensions, which corresponds to the same set $\mathcal{A}$ of primes in $K$ that split completely in respectable extension fields. Therefore, we can say that,
			\begin{center}
				$\mathcal{A}=\{\mathfrak{p}\in K\mbox{ }|\mbox{ }\mathfrak{p} \mbox{ is a prime, \& \hspace{10pt}}\mathfrak{p}\mbox{ splits completely in }L\}$\\
				$=\{\mathcal{P}\in K\mbox{ }|\mbox{ }\mathcal{P} \mbox{ is a prime, \& \hspace{10pt} }\mathcal{P}\mbox{ splits completely in }L'\}$.
			\end{center}
			Suppose that, $M=LL'$. Then, by theorem, we can conclude that,
			\begin{center}
				$\mathcal{A}=\{\wp\in K\mbox{ }|\mbox{ }\wp \mbox{ is a prime, \& \hspace{10pt} }\wp\mbox{ splits completely in }M\}$.
			\end{center}
			Consequently, having the same polar density, we have, by theorem \eqref{thm3},
			\begin{center}
				$[M:K]=[L:K]=[L':K]$ \\
				$\Rightarrow$  $L=L'$. 
			\end{center}
			And hence, the corollary \eqref{cor4} holds true.
			\end{proof}
			\begin{rmk}\label{rmk9}
				In this paper, we have only observed about how a set of primes over a number field looks like in any finite field extension, and also applied the analytic concepts of \textit{Dedekind Zeta Functions} over number fields to obtain an expression for the density of such sets. This article also reflects densities of such sets of primes for a special case of the cyclotomic extensions over $\mathbb{Q}$.
			\end{rmk}
			\begin{rmk}\label{rmk10}
				It is also worth mentioning that, there are various applications of \textit{Dedekind Zeta Functions}, one of which is to obtain the well-known \textit{\textbf{Class Number Formula}}.
			\end{rmk}
			\begin{rmk}\label{rmk11}
				Another important property of these functions, which can often be said to be the relation between \textit{Dedekind Zeta Functions} and \textit{Dirichlet's L-functions} is that, Dedekind Zeta Functions can be factored into L-functions having a simpler functional equation.This is also an important fact linked to one of  the most famous conjectures \cite[p.~18]{2} in number theory:
				\begin{center}
					\textit{\textbf{ Artin's Conjecture on Analytic continuation of L-Series}}.
				\end{center} 
			\end{rmk}
			\vspace{160pt}
			\section*{Acknowledgments}
			I'll always be grateful to \textbf{Prof. A. Raghuram} ( Professor, Department of Mathematics, Fordham University, New York, USA ) for supervising my research project on this topic. His kind guidance helped me immensely in detailed understanding of this topic.
			
			\newpage

			\vspace{100pt}

	\end{document}